\newtheorem{theorem}[equation]{Theorem}
\newtheorem{lemma}[equation]{Lemma}
\newtheorem{corollary}[equation]{Corollary}
\numberwithin{equation}{section}
\title{Generalized convexity of the Lambert $W$ function} 
\author{Gendi Wang}
\address{\textbf{Gendi Wang}\newline
School of Science, Zhejiang Sci-Tech University, Hangzhou 310018, China}
\email{gendi.wang@zstu.edu.cn }
\begin{document} 
\newcounter{minutes}\setcounter{minutes}{\time}
\divide\time by 60
\newcounter{hours}\setcounter{hours}{\time}
\multiply\time by 60 \addtocounter{minutes}{-\time}
\def\thefootnote{}
\footnotetext{ {\tiny File:~\jobname.tex,
          printed: \number\year-\number\month-\number\day,
          \thehours.\ifnum\theminutes<10{0}\fi\theminutes }}
\makeatletter\def\thefootnote{\@arabic\c@footnote}\makeatother

\maketitle

\begin{abstract}
This paper investigates the generalized convexity properties of the Lambert $W$ function, defined as the solution to
$W(z)e^{W(z)}=z$. Focusing on $H_{p,q}$-convexity and concavity with respect to H\"older means, we derive necessary and sufficient conditions for $W$ to exhibit strict $H_{p,q}$-convexity or concavity on the interval $(0,+\infty)$. The main result characterizes these properties in terms of specific parameter regions $(p,q)$-plane. Inequalities involving harmonic, geometric, and arithmetic means are established, with equalities holding only when $x=y$.
\end{abstract}

{\small \sc Keywords.} {Lambert $W$ function, H\"older mean, convexity, concavity, inequalities }

{\small \sc Mathematics Subject Classification~(2010).} {33E20, 26D07}


\section{Introduction}

The Lambert $W$ function, defined as the solution of the equation
$W(z) e^{W(z)}=z$ for $z\in\mathbb{C}$,
 is a fundamental transcendental function with applications across mathematics, physics, and engineering.
It plays an important role in solving exponential equations, modeling growth processes, and analyzing combinatorial structures  \cite{ corless1996,mezo2022}.

When the Lambert $W$ function  is considered as a real function, its principal branch is strictly increasing from $(-\frac1e,+\infty)$ onto $(-1,+\infty)$ with $W(0)=0$ and grows logarithmically at large values of the variable.  In the last decades,  approximations of the Lambert $W$  function have been studied extensively \cite{AS2018, IB2017, Salem2020, Stewart2009}.

Despite its widespread utility, the convexity properties of the Lambert $W$ function, particularly in the context of generalized convexity, remain an area of active investigation. Generalized convexity, especially $H_{p,q}$-convexity, extends classical convexity by incorporating H\"older means, which include arithmetic, geometric, and harmonic means as special cases. Understanding such properties is crucial for optimizing inequalities and refining mathematical models in various fields \cite{AVV, GM, N, ZWC} . However, the $H_{p,q}$-convexity of the Lambert $W$ function has not been thoroughly explored, leaving gaps in the theoretical framework surrounding this function.

This paper aims to address this gap by characterizing the $H_{p,q}$-convexity and concavity of the Lambert
$W$ function on the interval $(0,+\infty)$. By deriving necessary and sufficient conditions for
$W$ to exhibit strict $H_{p,q}$-convexity or concavity, we provide a comprehensive analysis that bridges theoretical and applied mathematics. Our  main result is stated as the following theorem.

\begin{theorem}\label{arch}
~For $p,q\in \mathbb{R}$, the Lambert $W$ function is strictly $H_{p,q}$-convex on $(0,+\infty)$ if and only if $(p,q)\in D_1\cup D_2$,
while $W$ is strictly $H_{p,q}$-concave on $(0,+\infty)$ if and only if $(p,q)\in D_3$, where
$$D_1=\{(p,q)|-\infty<p\le -1,\,p\le q<+\infty\},$$
$$D_2=\{(p,q)|-1< p\le 0, \,  C(p)\le q<+\infty\},$$
$$D_3=\{(p,q)|0\le p<+\infty,\,-\infty<q\le p\},$$
and $C(p)=-2\sqrt{-p}+1$. In particular, for all $x,\,y\in(0,+\infty)$, there hold
\begin{equation}\label{h-1-1}
W\left(\frac{2xy}{x+y}\right)\le\frac{2W(x)W(y)}{W(x)+W(y)},
\end{equation}
and
\begin{equation}\label{h00}
W\left(\left(\frac{2\sqrt[4]{xy}}{\sqrt[4]{x}+\sqrt[4]{y}}\right)^4\right)\le\sqrt{W(x)W(y)}\le W(\sqrt{xy})\le \frac12(W(x)+W(y)),
\end{equation}
with equalities if and only if $x =y$.
\end{theorem}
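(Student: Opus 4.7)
The plan is to convert $H_{p,q}$-convexity (resp.\ concavity) of $W$ on $(0,+\infty)$ into a single second-derivative inequality and then analyze the resulting polynomial. The standard reduction is that, for $p\neq0$ and $q\neq0$, strict $H_{p,q}$-convexity (resp.\ concavity) of $W$ is equivalent to $(\operatorname{sgn} q)\,F''(s)>0$ (resp.\ $<0$) on $(0,+\infty)$, where $F(s):=W(s^{1/p})^{q}$; the boundary case $q=0$ is treated with $\log W(s^{1/p})$, and $p=0$ with $g(t):=W(e^{t})^{q}$ or $\log W(e^{t})$. I carry out this reduction by parametrizing with $u:=W(x)$, so that $x=ue^{u}$, $s=u^{p}e^{pu}$, and $F=u^{q}$, with $u$ running over $(0,+\infty)$.

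A direct parametric differentiation, using $ds/du=p\,u^{p-1}e^{pu}(1+u)$, gives
$$F''(s)\;=\;\frac{q}{p^{2}}\cdot\frac{u^{q-2p}\,e^{-2pu}}{(1+u)^{3}}\cdot P(u),\qquad P(u):=-pu^{2}+(q-2p-1)u+(q-p).$$
Every factor other than $q/p^{2}$ and $P(u)$ is positive on $(0,+\infty)$, so strict $H_{p,q}$-convexity of $W$ collapses to $P(u)>0$ on $(0,+\infty)$ (with equality permitted at isolated points) whenever $p<0$, the two signs of $q$ producing the same inequality, and the limiting case $q=0$ recovered from a direct computation of $(\log W(s^{1/p}))''(s)$. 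Similarly, strict $H_{p,q}$-concavity collapses to $P(u)<0$ on $(0,+\infty)$ whenever $p>0$. Since $P$ opens upward when $p<0$ and downward when $p>0$, these sign conditions already force convexity to live in $p\leq 0$ and concavity in $p\geq 0$.

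The remainder is an elementary analysis based on the discriminant $\Delta=(q-1)^{2}+4p$ together with Vieta's formulas. For $p>0$, $\Delta>0$ automatically, and ``both roots $\leq 0$'' amounts to $q\leq p$ and $q\leq 2p+1$, of which the former is binding since $p\leq 2p+1$ for $p\geq 0$; this produces $D_{3}$. For $p<0$, $P(u)>0$ on $(0,+\infty)$ holds either when $\Delta\leq 0$ (i.e.\ $1-2\sqrt{-p}\leq q\leq 1+2\sqrt{-p}$) or when $\Delta>0$ together with both roots nonpositive ($q\geq p$ and $q\geq 2p+1$). Comparing the four quantities $p$, $2p+1$, $1\pm 2\sqrt{-p}$ via the elementary identities $p\leq 2p+1\Leftrightarrow p\geq-1$ and $C(p)\geq p\Leftrightarrow(p+1)^{2}\geq 0$, one finds that the union of admissible $q$'s is exactly $[p,+\infty)$ when $p\leq-1$ (giving $D_{1}$) and exactly $[C(p),+\infty)$ when $-1<p\leq 0$ (giving $D_{2}$); the threshold $C(p)=1-2\sqrt{-p}$ emerges naturally from $\Delta=0$. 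The corner $p=0$ is checked by a direct computation with $\phi(t):=W(e^{t})$, $\phi'=\phi/(1+\phi)$, $\phi''=\phi/(1+\phi)^{3}$, which confirms the limit $C(0)=1$ on the convexity side and the condition $q\leq 0$ on the concavity side.

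The main obstacle will be the bookkeeping in the $p<0$ case: the locus where $P>0$ on $(0,+\infty)$ must be extracted by matching the three subcases $\Delta<0$, $\Delta=0$, $\Delta>0$ against the signs of $P(0)=q-p$ and of the sum/product of roots, and then consolidated into a uniform answer in terms of the threshold $C(p)$. Once Theorem~\ref{arch} is established, the listed inequalities follow by direct specialization: $(p,q)=(-1,-1)\in D_{1}$ yields \eqref{h-1-1}, and $(p,q)=(-1/4,0)\in D_{2}$, $(0,0)\in D_{3}$, $(0,1)\in D_{2}$ successively yield the three inequalities of \eqref{h00}, after rewriting $\bigl(2\sqrt[4]{xy}/(\sqrt[4]{x}+\sqrt[4]{y})\bigr)^{4}=H_{-1/4}(x,y)$.
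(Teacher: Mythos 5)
Your proposal is correct in substance, but it takes a genuinely different route from the paper. The paper works with the two-variable difference $F(x,y)=W^q(H_p(x,y))-\tfrac12\bigl(W^q(x)+W^q(y)\bigr)$ (and its analogues for $q=0$, $p=0$) and reduces everything to monotonicity of $g_{p,q}(r)=W^q(r)/\bigl(r^p(W(r)+1)\bigr)$, which in turn is settled by studying the range of $h_p(r)=p(W(r)+1)+W(r)/(W(r)+1)$; the threshold $C(p)=1-2\sqrt{-p}$ appears there as $\max h_p$ for $-1<p<0$. You instead use the standard equivalence between $H_{p,q}$-convexity and ordinary convexity/concavity of $s\mapsto W(s^{1/p})^q$ (midpoint convexity plus continuity; this is the same reduction underlying the Anderson--Vamanamurthy--Vuorinen criterion the paper implicitly uses), and then compute the second derivative parametrically via $u=W(x)$, arriving at the sign of the explicit quadratic $P(u)=-pu^2+(q-2p-1)u+(q-p)$. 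I checked your formula for $F''$ and your discriminant $\Delta=(q-1)^2+4p$; both are correct, and in fact with $z=W+1$ one has $P(u)=z\bigl(q-h_p\bigr)$, so your quadratic analysis is exactly the paper's $h_p$-range analysis in disguise. Your route buys a more unified case discussion (the sign of one quadratic on $(0,\infty)$, with $C(p)$ emerging from $\Delta=0$ and Vieta handling the rest, including the ``neither convex nor concave'' direction via a sign change of $P$), at the cost of invoking the convexity-transfer equivalence and of the separate corner computations at $p=0$ and $q=0$, which you do supply correctly ($\phi'=\phi/(1+\phi)$, $\phi''=\phi/(1+\phi)^3$). Two small points to tighten in a write-up: the opening claim that strict $H_{p,q}$-convexity is \emph{equivalent} to $(\operatorname{sgn}q)F''>0$ everywhere should be stated as $(\operatorname{sgn}q)F''\ge0$ with the zero set not containing an interval (you do acknowledge this parenthetically, and it is harmless here since $P$ is a nonzero quadratic); and the final specializations should note that $(0,1)$ and $(-1/4,0)$ lie in $D_2$ while $(0,0)$ lies in $D_3$, exactly as you list, which indeed yields \eqref{h-1-1} and the three inequalities of \eqref{h00} with equality only at $x=y$.
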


\begin{figure}[h]
\centering
\includegraphics[width=8.2cm]{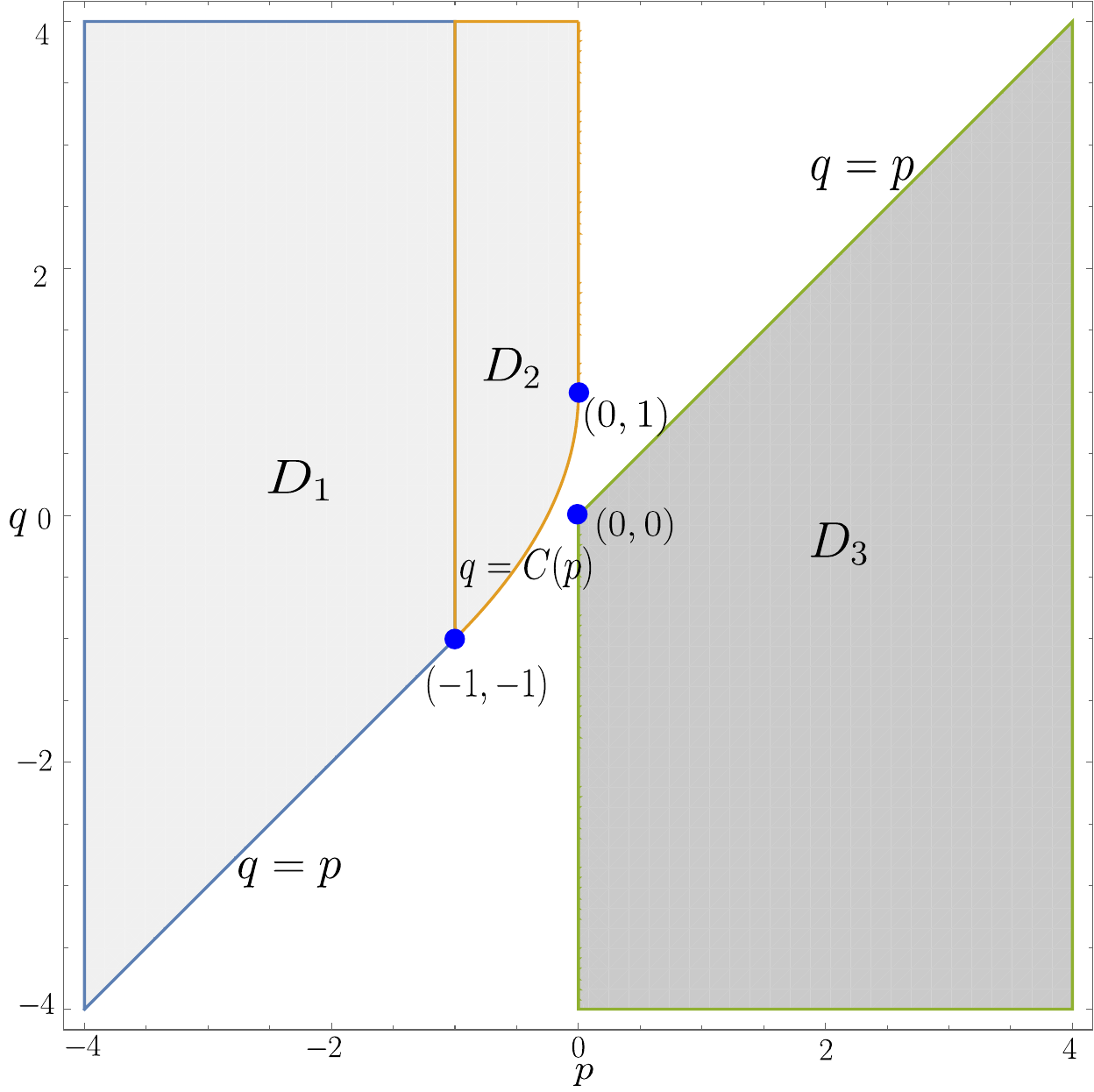}
\caption{The Lambert $W$ function is $H_{p,q}$-convex for $(p,q)\in D_1\cup D_2$ and $H_{p,q}$-concave for $(p,q)\in D_3$. }
\end{figure}

\section{Preliminaries}

For $r,s\in (0,+\infty)$, the {\it H\"older mean of order $p$} is defined by
\begin{eqnarray*}
H_p(r,s)=\Big(\frac{r^p+s^p}{2}\Big)^\frac{1}{p}\,\,\,\, {\rm for}\,\, \,p\ne 0, \,\,\,\,\,\,\,\,H_0(r,s)=\sqrt{rs}.
\end{eqnarray*}
For $p=1$, we get the arithmetic mean $A=H_1$; for $p=0$, the geometric mean $G=H_0$; and for $p=-1$, the harmonic mean $H=H_{-1}$.
It is well known that $H_p(r,s)$ is continuous and increasing with respect to $p$.

A function $f:(0,+\infty)\to(0,+\infty)$ is called {\it $H_{p,q}$-convex~(concave)} if it satisfies
\begin{equation*}
f\left(H_p(r,s)\right)\le (\ge )H_q\left(f(r),f(s)\right)
\end{equation*}
for all $r,s\in(0,\infty)$, and strictly {\it $H_{p,q}$-convex~(concave)} if the inequality is strict except for $r=s$.

The derivative of the Lambert $W$ function is
$$W'(z)=\frac{W(z)}{z(W(z)+1)}.$$

We prove the following two lemmas before giving the proof of Theorem \ref{arch}.

\begin{lemma}\label{3hr}
~For $p\in \mathbb{R}$ and $r\in (0,+\infty)$, define
$$h_p(r)=p(W(r)+1)+\frac{W(r)}{W(r)+1}.$$
(1) If $p>0$, then $h_p$ is strictly increasing with range $(p,+\infty)$.\\
(2) If $p=0$, then $h_p$ is strictly increasing with range $(0,1)$.\\
(3) If $p\le-1$, then $h_p$ is strictly decreasing with range $(-\infty,p)$.\\
(4) If $-1<p<0$, then $h_p$ is not monotone and the range of $h_p$ is $(-\infty,C(p)]$, where
$$C(p)=-2\sqrt{-p}+1.$$
\end{lemma}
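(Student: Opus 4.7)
The plan is to reduce the lemma to elementary calculus by substituting $w = W(r)$. Since the principal branch of $W$ maps $(0,+\infty)$ strictly and continuously onto $(0,+\infty)$, writing $h_p(r) = g_p(W(r))$ with
$$g_p(w) = p(w+1) + \frac{w}{w+1}, \qquad w \in (0,+\infty),$$
transfers the monotonicity and range statements from $h_p$ to $g_p$ without loss. I would then compute
$$g_p'(w) = p + \frac{1}{(w+1)^2},$$
which is itself strictly decreasing in $w$, with $g_p'(0^+) = p+1$ and $g_p'(+\infty) = p$. I would also record the boundary limits $g_p(0^+) = p$ and $g_p(+\infty) \in \{+\infty,\,1,\,-\infty\}$ depending on whether $p>0$, $p=0$, or $p<0$.

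Parts (1)--(3) then follow by a direct sign analysis. When $p \geq 0$, both terms of $g_p'$ are nonnegative and strictly positive on $(0,+\infty)$, so $g_p$ is strictly increasing; the boundary limits yield range $(p,+\infty)$ for $p>0$ and $(0,1)$ for $p=0$. When $p \leq -1$, the strict inequality $(w+1)^{-2} < 1 \leq -p$ for $w>0$ gives $g_p'(w) < 0$ throughout, so $g_p$ is strictly decreasing from $p$ to $-\infty$, producing range $(-\infty,p)$.

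The technical crux is part (4). For $-1<p<0$, we have $g_p'(0^+) = p+1 > 0 > p = g_p'(+\infty)$, so by strict monotonicity of $g_p'$ there is a unique interior critical point $w^\ast$. Solving $g_p'(w^\ast)=0$ gives $(w^\ast+1)^2 = -1/p$, hence $w^\ast = \sqrt{-1/p} - 1$, which is strictly positive precisely because $-1<p<0$. This is a global maximum, and direct substitution (using $p\sqrt{-1/p} = -\sqrt{-p}$ and $1/\sqrt{-1/p} = \sqrt{-p}$) yields
$$g_p(w^\ast) = p\sqrt{-1/p} + 1 - \frac{1}{\sqrt{-1/p}} = -\sqrt{-p} + 1 - \sqrt{-p} = 1 - 2\sqrt{-p} = C(p).$$
Combined with $g_p(+\infty) = -\infty$, the range is $(-\infty,C(p)]$. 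The main obstacle is confined to this case: locating $w^\ast$, verifying it lies in $(0,+\infty)$, and simplifying $g_p(w^\ast)$ to the closed form $C(p)$; the remaining cases are a routine sign analysis.
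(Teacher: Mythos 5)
Your proposal is correct and follows essentially the same route as the paper: both reduce $h_p$ to an elementary function of $w=W(r)$ (the paper uses $z=W(r)+1$), analyze the sign of the derivative $p+(w+1)^{-2}$ for parts (1)--(3), and in part (4) locate the unique interior maximum at $w+1=1/\sqrt{-p}$ to obtain $C(p)=1-2\sqrt{-p}$ and the range $(-\infty,C(p)]$. No gaps; your explicit verification that the critical point is interior and that $g_p'$ changes sign (so $h_p$ is not monotone) is slightly more detailed than the paper's ``clearly''.
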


\begin{proof}
It is easy to get
$$h_p(0^+)=p \quad{\rm and}\quad h_p(+\infty)=\left\{
            \begin{array}{cc}
             +\infty, &p>0,\\
             1, &p=0,\\
             -\infty, &p<0.
             \end{array}
             \right.$$
By differentiation, we have
\begin{equation*}
h_p'(r)=W'(r)\left(p-f_1(r)\right),
\end{equation*}
where
$$f_1(r)=-\frac{1}{(W(r)+1)^2}.$$
Clearly, $f_1$ is strictly increasing with range $(-1,0)$.

(1)(2) If $p\ge0$, then $h_p'(r)>0$. Hence ~$h_p$ is strictly increasing with range ~$(p,+\infty)~$ if $p>0$ and $(0,1)$ if $p=0$.

(3) If $p\le-1$, then $h_p'(r)<0$. Hence ~$h_p$ is strictly decreasing with range ~$(-\infty,p)$.

(4) If $-1<p<0$, clearly $h_p$ is not monotone. Let $z=w(r)+1$, then
$$h_p(r)=p z-\frac1z+1\equiv f(z).$$
By differentiation in $z$, we have $f'(z)=p+\frac{1}{z^2}$ and  $f''(z)=-\frac{2}{z^3}$. Hence
$$C(p)=\max\limits_{r\in (0,+\infty)}h_p(r)=f\left(\frac 1{\sqrt{-p}}\right)=-2\sqrt{-p}+1,$$
the range of $h_p$ follows immediately.
\end{proof}

\noindent{\bf Remark.}
{\it Since $C(p)$ is strictly increasing and continus in $p$, 
we have $C(p)<0$ for $p\in (-1,-\frac 14)$, $C(p)>0$ for $p\in (-\frac 14,0)$ and $C(-\frac 14)=0$.}

\begin{lemma}\label{4hr}
~Let $p,\,q\in \mathbb{R}$, $r\in (0,+\infty)$, and $C(p)$ be the same as in Lemma \ref{3hr}(4). Let
\begin{equation*}
g_{p,q}(r)=\frac{W^q(r)}{r^p(W(r)+1)}.
\end{equation*}
(1) If $p>0$, then $g_{p,q}$ is strictly decreasing for each $q\le p$, and $g_{p,q}$ is not monotone for any $q>p$.\\
(2) If $p=0$, then $g_{p,q}$ is strictly increasing for each $q\ge 1$, and strictly decreasing for each $q\le 0$,
and $g_{p,q}$ is not monotone for any $0<q<1$.\\
(3) If $p\le-1$, then $g_{p,q}$ is strictly increasing for each $q\ge p$, and $g_{p,q}$ is not monotone for any $q<p$.\\
(4) If $-1<p<0$, then $g_{p,q}$ is strictly increasing for each $q\ge C(p)$, and $g_{p,q}$ is not monotone for any $q<C(p)$.
\end{lemma}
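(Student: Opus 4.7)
The plan is to reduce the monotonicity of $g_{p,q}$ on $(0,+\infty)$ to a sign analysis of $q-h_p(r)$, so that Lemma~\ref{3hr} can be applied directly. Taking the logarithmic derivative gives
\[
\frac{g'_{p,q}(r)}{g_{p,q}(r)} = q\,\frac{W'(r)}{W(r)} - \frac{p}{r} - \frac{W'(r)}{W(r)+1}.
\]
I would substitute the identity $W'(r)=W(r)/[r(W(r)+1)]$, clear denominators, and multiply through by the positive quantity $r(W(r)+1)$. After cancellation this produces the clean factorization
\[
r\bigl(W(r)+1\bigr)\frac{g'_{p,q}(r)}{g_{p,q}(r)} \;=\; q-p\bigl(W(r)+1\bigr)-\frac{W(r)}{W(r)+1} \;=\; q-h_p(r).
\]
Since $r(W(r)+1)>0$ and $g_{p,q}(r)>0$ on $(0,+\infty)$, the sign of $g'_{p,q}(r)$ coincides with that of $q-h_p(r)$.

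With this reduction in hand, each of the four cases becomes a direct transcription of the range information in Lemma~\ref{3hr}. In case~(1), $p>0$: the range of $h_p$ is $(p,+\infty)$, so $q\le p$ forces $q-h_p(r)<0$ throughout and hence strict decrease, while $q>p$ places $q$ inside the range of the strictly increasing function $h_p$, so $q-h_p$ changes sign by the intermediate value theorem and $g_{p,q}$ fails to be monotone. Cases~(2) and~(3) are handled identically using the ranges $(0,1)$ and $(-\infty,p)$, respectively. In case~(4), $-1<p<0$: $h_p$ attains a global maximum $C(p)$ with range $(-\infty,C(p)]$; if $q\ge C(p)$ then $q-h_p(r)\ge 0$ with equality at most at the unique maximizer, yielding strict increase; if $q<C(p)$, then $q$ lies strictly below the maximum of $h_p$, so on at least one side of the maximizer $h_p$ crosses the value $q$ and the sign of $q-h_p$ changes.

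The substantive step is the factorization $r(W+1)\,g'_{p,q}/g_{p,q} = q-h_p(r)$, which bridges this lemma to the preceding one; everything after that is bookkeeping against the four range regimes of $h_p$. I expect the only mild subtlety to be the boundary instance $q=C(p)$ in case~(4), where $g'_{p,q}$ vanishes at the isolated maximizer of $h_p$; this does not disturb strict monotonicity because $g'_{p,q}$ retains constant sign off one point. No new estimates on $W$ beyond its defining ODE are required, so once the logarithmic-derivative computation is carried out carefully, the lemma follows.
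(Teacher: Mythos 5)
Your proposal is correct and follows essentially the same route as the paper: logarithmic differentiation of $g_{p,q}$ together with $W'(r)=W(r)/[r(W(r)+1)]$ gives $\frac{g_{p,q}'(r)}{g_{p,q}(r)}=\frac{1}{r(W(r)+1)}\left(q-h_p(r)\right)$, after which the four cases are read off from the monotonicity and range statements of Lemma~\ref{3hr}. Your extra remarks (intermediate value argument for non-monotonicity, the isolated zero of $g_{p,q}'$ when $q=C(p)$) are details the paper leaves implicit, not a different method.
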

\begin{proof}
~By logarithmic differentiation in $r$, we have
\begin{equation*}
\frac{g_{p,q}'(r)}{g_{p,q}(r)}=\frac{1}{r(W(r)+1)}\left(q-h_p(r)\right),
\end{equation*}
where $h_p(r)$ is the same as in Lemma \ref{3hr}. Hence the results immediately follow from Lemma \ref{3hr}.
\end{proof}

\section{Proof of Main Result}

We are now in a position to prove Theorem \ref{arch}.

\begin{proof}[Proof of Theorem \ref{arch}]
~Without loss of generality, we may assume that $0<x\le y<+\infty$. Let $t=H_p(x,y)$, then $x\le t\le y$ and
\begin{equation*}
\frac{\partial t}{\partial x}=\frac{1}{2}\left(\frac{x}{t}\right)^{p-1}.
\end{equation*}

The proof is divided into the following four cases.

\medskip
{\bf Case 1.}~$p\ne 0$ and $q\ne 0$.

Define
\begin{equation*}
F(x,y)=W^q\,(H_p(x,y))-\frac12(W^q\,(x)+W^q\,(y)).
\end{equation*}
By differentiation, we have
\begin{equation*}
\frac{\partial F}{\partial x}=\frac{q}{2}x^{p-1}\left(\frac{W^q(t)}{t^{p}(W(t)+1)}-\frac{W^q(x)}{x^{p}(W(x)+1)}\right)
=\frac{q}{2}x^{p-1}\left(g_{p,q}(t)-g_{p,q}(x)\right),
\end{equation*}
where $g_{p,q}$ is defined in Lemma \ref{4hr}.

\medskip
{\bf Case 1.1}~$p>0$ and $q\le p$.

By Lemma \ref{4hr}(1), the function $g_{p,q}$ is strictly decreasing on $(0,+\infty)$.

{\bf Case 1.1.1}~If $q>0$, then $\frac{\partial F}{\partial x}\le 0$.
Hence $F(x,y)$ is strictly decreasing and $F(x,y)\ge F(y,y)=0$.
Namely,
\begin{equation*}
W(H_p(x,y))\ge \left(\frac{W^q(x)+W^q(y)}{2}\right)^\frac{1}{q}=H_q(W(x),W(y)),
\end{equation*}
with equality if and only if $x=y$.

{\bf Case 1.1.2}~If $q<0$, then $\frac{\partial F}{\partial x}\ge 0$.
Hence $F(x,y)$ is strictly increasing and $F(x,y)\le F(y,y)=0$.
Namely,
\begin{equation*}
W(H_p(x,y))\ge \left(\frac{W^q(x)+W^q(y)}{2}\right)^\frac{1}{q}=H_q(W(x),W(y)),
\end{equation*}
with equality if and only if $x=y$.

In conclusion, $W$ is strictly $H_{p,q}$-concave on the whole interval $(0,+\infty)$ for $(p,q)\in\{(p,q)|0< p<+\infty,\,0<q\le p\}\cup\{(p,q)|0< p<+\infty,\,\,q<0\}$.

\medskip
{\bf Case 1.2}~$p>0$ and $q>p$.

By Lemma \ref{4hr}(1), the function $g_{p,q}$ is not monotone on $(0,+\infty)$.
With an argument similar to Case 1.1, it is easy to see that $W$ is neither $H_{p,q}$-concave nor $H_{p,q}$-convex on the whole interval $(0,+\infty)$ for $(p,q)\in \{(p,q)|0<p<+\infty,\,q>p\}$.

\medskip
{\bf Case 1.3}~$p\le -1$ and $q\ge p$.

By Lemma \ref{4hr}(3), the function $g_{p,q}$ is strictly increasing on $(0,+\infty)$.

{\bf Case 1.3.1}~If $q>0$, then $\frac{\partial F}{\partial x}\ge 0$.
Hence $F(x,y)$ is strictly increasing and $F(x,y)\le F(y,y)=0$.
Namely,
\begin{equation*}
W(H_p(x,y))\le \left(\frac{W^q(x)+W^q(y)}{2}\right)^\frac{1}{q}=H_q(W(x),W(y)),
\end{equation*}
with equality if and only if $x=y$.

{\bf Case 1.3.2}~If $q<0$, then $\frac{\partial F}{\partial x}\le 0$.
Hence $F(x,y)$ is strictly decreasing and $F(x,y)\ge F(y,y)=0$.
Namely,
\begin{equation*}
W(H_p(x,y))\le \left(\frac{W^q(x)+W^q(y)}{2}\right)^\frac{1}{q}=H_q(W(x),W(y)),
\end{equation*}
with equality if and only if $x=y$.

In conclusion, $W$ is strictly $H_{p,q}$-convex on the whole interval $(0,+\infty)$ for $(p,q)\in\{(p,q)|-\infty<p\le -1,\, p\le q<0 \}\cup\{(p,q)|-\infty<p\le -1,\, 0<q<+\infty\}$.

\medskip
{\bf Case 1.4}~$p\le -1$ and $q< p$.

By Lemma \ref{4hr}(3), the function $g_{p,q}$ is not monotone on $(0,+\infty)$.
With an argument similar to Case 1.3, it is easy to see that $W$ is neither $H_{p,q}$-concave nor $H_{p,q}$-convex on the whole interval $(0,+\infty)$ for $(p,q)\in \{(p,q)|-\infty<p\le -1,\, q<p\}$.

\medskip
{\bf Case 1.5}~$-1<p< 0$ and $q\ge C(p)$.

By Lemma \ref{4hr}(4), the function $g_{p,q}$ is strictly increasing on $(0,+\infty)$.

{\bf Case 1.5.1}~If $q>0$, then $\frac{\partial F}{\partial x}\ge 0$.
Hence $F(x,y)$ is strictly increasing and $F(x,y)\le F(y,y)=0$.
Namely,
\begin{equation*}
W(H_p(x,y))\le \left(\frac{W^q(x)+W^q(y)}{2}\right)^\frac{1}{q}=H_q(W(x),W(y)),
\end{equation*}
with equality if and only if $x=y$.

{\bf Case 1.5.2}~If $q<0$, then $\frac{\partial F}{\partial x}\le 0$.
Hence $F(x,y)$ is strictly decreasing and $F(x,y)\ge F(y,y)=0$.
Namely,
\begin{equation*}
W(H_p(x,y))\le \left(\frac{W^q(x)+W^q(y)}{2}\right)^\frac{1}{q}=H_q(W(x),W(y)),
\end{equation*}
with equality if and only if $x=y$.

In conclusion, $W$ is strictly $H_{p,q}$-convex on the whole interval $(0,+\infty)$ for
$(p,q)\in \{(p,q)|-1<p<-\frac 14,\, C(p)\le q <0 \}
\cup \{(p,q)|-1<p\le -\frac 14,\, 0<q<+\infty\}
\cup\{(p,q)| -\frac 14< p<0,\,C(p)\le q<+\infty\}$.

\medskip
{\bf Case 1.6}~$-1<p<0$ and $q<(p)$.

By Lemma \ref{4hr}(4), the function $g_{p,q}$ is not monotone on $(0,+\infty)$.
With an argument similar to Case 1.5, it is easy to see that $W$ is neither $H_{p,q}$-concave nor $H_{p,q}$-convex on the whole interval $(0,+\infty)$ for $(p,q)\in \{(p,q)|-1<p<-\frac 14,\, -\infty<q<C(p) \}
\cup\{(p,q)| -\frac 14\le p<0,\,-\infty<q<0) \}
\cup\{(p,q)|-\frac 14<p<0,\,0<q<C(p) \}$.

\medskip
{\bf Case 2.}~$p\ne 0$ and $q=0$.

Define
\begin{equation*}
F(x,y)=\frac{W^2\,(H_p(x,y))}{W(x)\cdot W(y)}.
\end{equation*}
By logarithmic differentiation, we obtain
\begin{equation*}
\frac{1}{F}\frac{\partial F}{\partial x}=x^{p-1}(g_{p,0}(t)-g_{p,0}(x)),
\end{equation*}
where $g_{p,0}$ is defined in Lemma \ref{4hr}.

\medskip
{\bf Case 2.1}~$p>0$ and $q=0$.

By Lemma \ref{4hr}(1), the function $g_{p,0}$ is strictly decreasing on $(0,+\infty)$ and hence $\frac{\partial F}{\partial x}\le 0$.
Then $F(x,y)$ is strictly decreasing and $F(x,y)\ge F(y,y)=1$.
Namely,
\begin{equation*}
W(H_p(x,y))\ge \sqrt{W(x)\cdot W(y)}=H_0(W(x),W(y)),
\end{equation*}
with equality if and only if $x=y$.

In conclusion, $W$ is strictly $H_{p,q}$-concave on the whole interval $(0,+\infty)$ for $(p,q)\in \{(p,q)|0<p<+\infty,\,q=0\}$.

\medskip
{\bf Case 2.2}~$p\le -1$ and $q=0$.

By Lemma \ref{4hr}(3), the function $g_{p,0}$ is is strictly increasing on $(0,+\infty)$ and hence $\frac{\partial F}{\partial x}\ge 0$.
Then $F(x,y)$ is strictly increasing and $F(x,y)\le F(y,y)=1$.
Namely,
\begin{equation*}
W(H_p(x,y))\le \sqrt{W(x)\cdot W(y)}=H_0(W(x),W(y)),
\end{equation*}
with equality if and only if $x=y$.

In conclusion, $W$ is strictly $H_{p,q}$-convex on the whole interval $(0,+\infty)$ for $(p,q)\in \{(p,q)|-\infty <p\le -1,\,q=0\}$.

\medskip
{\bf Case 2.3}~$-1<p<0$ and $q=0$.

{\bf Case 2.3.1}~$-1<p\le -\frac 14$ and $q=0$.

By Lemma \ref{4hr}(4), the function $g_{p,0}$ is strictly increasing on $(0,+\infty)$ and hence $\frac{\partial F}{\partial x}\ge 0$.
Then $F(x,y)$ is strictly increasing and $F(x,y)\le F(y,y)=1$.
Namely,
\begin{equation*}
W(H_p(x,y))\le \sqrt{W(x)\cdot W(y)}=H_0(W(x),W(y)),
\end{equation*}
with equality if and only if $x=y$.

In conclusion, $W$ is strictly $H_{p,q}$-convex on the whole interval $(0,+\infty)$ for $(p,q)\in \{(p,q)|-1<p\le -\frac 14,\, q=0 \}$.

{\bf Case 2.3.2}~$-\frac 14<p<0$ and $q=0$.

By Lemma \ref{4hr}(4), the function $g_{p,0}$ is not monotone on $(0,+\infty)$. With an argument similar to Case 2.1-2.2, it is easy to see that $W$ is neither $H_{p,q}$-concave nor $H_{p,q}$-convex on the whole interval $(0,+\infty)$ for $(p,q)\in \{(p,q)|-\frac 14<p<0,\,q=0\}$.

\medskip
{\bf Case 3.}~$p=0$ and $q\ne 0$.

Define
\begin{equation*}
F(x,y)=W^q\,(\sqrt{xy})-\frac12\left(W^q(x)+W^q(y)\right).
\end{equation*}
By differentiation, we obtain
\begin{equation*}
\frac{\partial F}{\partial x}=\frac{q}{2x}(g_{0,q}(t)-g_{0,q}(x)),
\end{equation*}
where $g_{0,q}$ is defined in Lemma \ref{4hr}.

\medskip
{\bf Case 3.1}~$p=0$ and $q\ge 1$.

By Lemma \ref{4hr}(2), the function $g_{0,q}$ is strictly increasing on $(0,+\infty)$ and hence $\frac{\partial F}{\partial x}\ge 0$.
Then $F(x,y)$ is strictly increasing and $F(x,y)\le F(y,y)=0$.
Namely,
\begin{equation*}
W(H_0(x,y))\le \left(\frac{W^q(x)+W^q(y)}{2}\right)^\frac{1}{q}=H_q(W(x),W(y)),
\end{equation*}
with equality if and only if $x=y$.

In conclusion, $W$ is strictly $H_{p,q}$-convex on the whole interval $(0,+\infty)$ for $(p,q)\in \{(p,q)|p=0,\,q\ge 1\}$.

\medskip
{\bf Case 3.2}~$p=0$ and $q<0$.

By Lemma \ref{4hr}(2), the function $g_{0,q}$ is strictly decreasing on $(0,+\infty)$ and hence $\frac{\partial F}{\partial x}\ge 0$.
Then $F(x,y)$ is strictly increasing and $F(x,y)\le F(y,y)=0$.
Namely,
\begin{equation*}
W(H_0(x,y))\ge \left(\frac{W^q(x)+W^q(y)}{2}\right)^\frac{1}{q}=H_q(W(x),W(y)),
\end{equation*}
with equality if and only if $x=y$.

In conclusion, $W$ is strictly $H_{p,q}$-concave on the whole interval $(0,+\infty)$ for $(p,q)\in \{(p,q)|p=0,\,q<0\}$.

\medskip
{\bf Case 3.3}~$p=0$ and $0<q<1$.

By Lemma \ref{4hr}(2), the function $g_{0,q}$ is not monotone on $(0,+\infty)$.
With an argument similar to Case 3.1-3.2, it is easy to see that $W$ is neither $H_{p,q}$-concave nor $H_{p,q}$-convex on the whole interval $(0,+\infty)$ for $(p,q)\in \{(p,q)|p=0,\,0<q<1)\}$.

\medskip
{\bf Case 4.}~$p=0$ and $q=0$.

By Case 1.1, for all $x,y\in (0,+\infty)$, we have
\begin{equation*}
W(H_p(x,y))\ge H_p(W(x),W(y))\quad\quad {\rm for}\quad p>0.
\end{equation*}
By the continuity of $H_p$ in $p$ and $W$ in $x$, we have
\begin{equation*}
W(H_0(x,y))\ge H_0(W(x),W(y)),
\end{equation*}
with equality if and only if $x=y$.

In conclusion, $W$ is strictly $H_{0,0}$-concave on the whole interval $(0,+\infty)$.

\medskip
By Case 1.3.2, Case 2.3.1, Case 3.1 and Case 4,  $W$ is strictly $H_{-1,\,-1}$-convex, strictly $H_{-\frac14,\,0}$-convex, strictly $H_{0,\,1}$-convex and strictly $H_{0,\,0}$-concave on $(0,+\infty)$.
Therefore, the inequalities \eqref{h-1-1}  and \eqref{h00}  hold with equalities if and only if $x =y$.

This completes the proof of Theorem \ref{arch}.
\end{proof}

Setting $p=1=q$ in Theorem \ref{arch}, we easily obtain the concavity of $W$.

\begin{corollary}
The Lambert $W$ function is strictly concave on $(0,+\infty)$.
\end{corollary}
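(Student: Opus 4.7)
The plan is to read off this corollary as a direct specialization of Theorem \ref{arch} at the single point $(p,q)=(1,1)$. First I would verify that $(1,1)\in D_3$: indeed $D_3=\{(p,q)\mid 0\le p<+\infty,\,-\infty<q\le p\}$ contains $(1,1)$ since $0\le 1$ and $1\le 1$. Therefore Theorem \ref{arch} immediately yields that $W$ is strictly $H_{1,1}$-concave on $(0,+\infty)$.

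Next I would unpack what $H_{1,1}$-concavity says. Since $H_1$ is the arithmetic mean, strict $H_{1,1}$-concavity of $W$ is exactly
\[
W\!\left(\frac{x+y}{2}\right)\ \ge\ \frac{W(x)+W(y)}{2}
\]
for all $x,y\in(0,+\infty)$, with equality only if $x=y$; that is, $W$ is strictly midpoint concave on $(0,+\infty)$.

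Finally, I would upgrade midpoint concavity to full concavity using continuity. The principal branch of $W$ is continuous (in fact smooth) on $(0,+\infty)$, and a standard theorem (sometimes attributed to Jensen) says that a continuous midpoint-concave function on an open interval is concave in the usual sense, with strictness preserved because equality in the midpoint inequality forces $x=y$. This finishes the argument.

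There is essentially no obstacle to overcome here: Theorem \ref{arch} supplies all the analytic content, and the only remaining step is the classical continuity-plus-midpoint-concavity implies concavity lemma. One could alternatively bypass the theorem entirely and compute $W''$ directly from the identity $W'(z)=W(z)/(z(W(z)+1))$, but the corollary is naturally stated as a specialization, so the route above is the shortest.
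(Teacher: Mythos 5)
Your proposal is correct and matches the paper's own argument, which simply sets $p=q=1$ in Theorem \ref{arch} to obtain strict $H_{1,1}$-concavity (i.e.\ strict midpoint concavity) and concludes concavity of $W$ on $(0,+\infty)$. You merely make explicit the routine continuity-plus-midpoint-concavity step that the paper leaves implicit.
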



\medskip

\subsection*{Acknowledgments}
 This research was supported by National Natural Science Foundation of China (NSFC) under Grant No.11771400.


\end{document}